\title{Chaos propagation in genetic algorithms:\\ An optimal transport approach}
\author{Giacomo Borghi\thanks{Maxwell Institute for Mathematical Sciences and Department of Mathematics, School of Mathematical and Computer Sciences (MACS), Heriot-Watt University, Edinburgh, UK
  (\texttt{g.borghi@hw.ac.uk}).}
}
\newcommand{\rev}[1]{#1}
\newcommand{\Rd}{\mathbb{R}^d}
\newcommand{\RR}{\mathbb{R}}
\newcommand{\x}{\mathbf{x}}
\newcommand{\w}{\mathbf{w}}
\newcommand{\unif}{\mathrm{Unif}}
\newcommand{\law}{\mathrm{Law}}
\newcommand{\lip}{\mathrm{Lip}}
\newcommand{\ve}{\varepsilon}
\newcommand{\W}{\mathbb{W}}
\newcommand{\WW}{\mathbb{W}_{1}^{\wedge}}
\newcommand{\F}{\mathbf{F}}
\renewcommand{\d}{\mathrm{d}}
\newcommand{\BL}{\mathtt{BL}}
\newcommand{\OX}{\overline{X}}
\newcommand{\X}{\mathbf{X}}
\renewcommand{\j}{\mathbf{j}}
\newcommand{\D}{{D^\wedge}}
\newtheorem{theorem}{Theorem}[section]
\newtheorem{lemma}[theorem]{Lemma}
\theoremstyle{definition}
\newtheorem{definition}[theorem]{Definition}
\theoremstyle{remark}
\newtheorem{assumption}[theorem]{Assumption}
\numberwithin{equation}{section}
\begin{document}

\maketitle

\begin{abstract}
Genetic algorithms are high-level heuristic optimization methods which enjoy great popularity thanks to their intuitive description, flexibility, and, of course, effectiveness. The optimization procedure is based on the evolution of possible solutions following three mechanisms: selection, mutation, and crossover. In this paper, we look at the algorithm as an interacting particle system and show that it is described by a Boltzmann-type equation in the many particles limit. Specifically, we prove a propagation of chaos result with a novel technique that leverages the optimal transport formulation of the \rev{bounded Lipschitz} norm and naturally incorporates the crossover mechanism into the analysis. \rev{The convergence admits a rate with respect to the number of particles, corresponding to the optimal rate in the Wasserstein-1 distance.} 
\end{abstract}

\bigskip

\textbf{keywords:}
Genetic algorithms, interactive particle systems, Boltzmann equation, propagation of chaos, Wasserstein distance, Nanbu method

\bigskip
\textbf{MSCcodes:}
82C22,
35Q20,
65C05,
90C59,
49Q22


\section{Introduction and main result}

Evolutionary algorithms \cite{EibenSmith2015} are a class of  solvers for black-box optimization problems where the lack of sufficient structure makes it difficult to employ classical mathematical programming techniques like gradient-based optimizers. 
For instance, the objective function may be non-differentiable, or extremely noisy, or the search space may be non-Euclidean or not even a manifold. In these scenarios, where searching for solutions at random seems the only viable option, evolutionary computation offers a more effective strategy inspired by nature.

Current prominent applications of evolutionary algorithms are in  training and inference of artificial intelligence models. In Population-Based Training \cite{jaderberg2017Population} a class of neural networks collective train by copying and mutating the best performing ones.  In Evolutionary Model Merge \cite{akiba2025evolutionary} models are selected and merged to transfer knowledge across domains without re-training. Model Soups \cite{wortsman2022model}  also merges the weights of different models to improve hyperparameter search. In Mind Evolution \cite{lee2025Evolving} candidate responses generated by large language models are iteratively mutated and refined to improve accuracy and reduce inference time.

The Genetic Algorithm (GA) \cite{goldberg1989,holland1992adaptation,katoch2021} is a paradigmatic example of evolutionary computation including the three main strategies that are used to refine (or \textit{evolve}) a set of solutions (or \textit{particles}):
\begin{itemize}
\item \textbf{selection:} best solutions are selected for reproduction;
\item \textbf{crossover:} new offspring are created by mixing two parent solutions;
\item \textbf{mutation:} newly created solutions are stochastically perturbed.
\end{itemize}

We will consider in this paper a GA for continuous optimization \cite{rawlins1991real} over the search space $\Rd$ where the aim is to maximize a given fitness function $\F:\Rd \to \RR$. Given a set of $N$ possible solutions $\{x^i\}_{i=1}^N$,  $x^i \in \Rd$ is selected for reproduction proportionally to $\F(x^i)$. For two parent solutions $(x,x_*)$, the offspring is created as 
\begin{equation}\label{eq:coll}
x' = \underbrace{(1 - \gamma)\odot x + \gamma \odot x_*}_{\textup{crossover}}\; + \!\underbrace{\sigma\xi}_{\textup{mutation}}
\end{equation}
where $\gamma\sim \mu^\gamma$ is a random crossover vector taking values in $[0,1]^d$, $\odot$ is the component-wise product, $\xi\sim \mu^{\xi}$ is a random mutation vector in $\Rd$, typically with zero-mean and identity covariance, and $\sigma>0$ determines the mutation strength. An additional parameter $\tau \in (0,1]$ determines the probability of a solution to be substituted by a new offspring at each iteration. $\tau = 1$ corresponds to a fully synchronous update, where each solution is updated, while with $\tau = 1/N$ only one solution is expected to update per iteration on average. The precise algorithmic strategy is described in Algorithm \ref{alg:ga}.

\subsection{Main result} 

The GA algorithm can be understood as a particle system undergoing binary interactions of jump type, similarly to physical particles in rarefied gases undergoing binary collisions. Such systems can be statistically described by a probability distribution $f = f(t,x) \in \mathcal{P}(\Rd), t\geq0,$ evolving according to an integro-differential equation of Boltzmann type \cite{graham1997stochastic}
\begin{equation}\label{eq:boltzmann}
\frac{\partial f}{\partial t} = Q^+(f,f) - f
\end{equation}
where $Q^+$ is the so-called gain operator defined as 
\begin{equation}\label{eq:gain}
\langle \phi, Q^+(f,f)\rangle  = \iiiint \frac{\F(x)\F(x_*)}{\langle \F,  f\rangle^2} \phi(x') f(\d x) f(\d x_*) \mu^\gamma(\d \gamma) \mu^{\xi}(\d \xi)
\end{equation}
for any bounded test function $\phi$. 

Our objective is to quantify this connection by showing that as $N\to \infty$ and $\tau \to 0$, the random empirical measure constructed by Algorithm \ref{alg:ga} 
\[
f_n^N = \frac1N\sum_{i=1}^N \delta_{X^i_n}
\]
converges to measure solutions of \eqref{eq:boltzmann} with a rate. To quantify convergence, we consider the \rev{Bounded-Lipschitz (BL) norm \cite{dudley2018real}} for signed Radon measures
\rev{\begin{equation} \label{eq:BL}
\| \mu \|_\BL:= \sup \left \{\int \phi(x)\mu(\d x)\, \middle |\, \textup{Lip}(\phi)\leq 1, \, \|\phi\|_{\infty}\leq \frac12 \right\}\,\qquad \mu\in\mathcal{M}(\Rd)\,,
\end{equation}
also known as flat metric between measures.}

\begin{assumption} \label{asm:f0}
The initial distribution $f_0$ has $q$ finite moments, $f_0 \in \mathcal{P}_q(\Rd)$ with $q > 2$
for $d \leq 2$, and $q \geq d/(d-1)$ for $d > 2$.
\end{assumption}

At time 0, we know particles are independent and $f_0$-distributed. Under Assumption \ref{asm:f0} we can quantify the expected error as 
\begin{equation}
\mathbb{E}\| f^N_0 - f_0 \|_\rev{\BL} \lesssim M_q^{1/q}(f_0)\ve_1(N)
\end{equation}
where $M_q(f_0) := \int |x|^q f_0(\d x)$, and $\ve_1(N)\to 0$ as $N\to \infty$, is a rate which only depends on $N$. We refer to  \cite{fournier2015rate}, or \rev{Lemma \ref{l:rate}}, for more details on this concentration inequality. 

The main result of the paper is showing that the convergence rate $\ve_1(N)$ is maintained  also at subsequent times, with an additional error of order $\mathcal{O}(\tau)$ given by the time discretization of the PDE. This means that particles generated by Algorithm \ref{alg:ga} 
become independent on one another as $N\to \infty$, showing chaotic behavior.

\rev{\begin{assumption}
\label{asm:F} The fitness function is globally Lipschitz continuous, bounded, and strictly positive. Specifically, it satisfies $\lip(\F) := \sup_{x\neq y}|\F(x) - \F(y)|/|x - y|< +\infty$, and 
$\F : \Rd \to [\underline{\F}, \overline{\F}]$ with $\overline{\F}>\underline{\F}>0$.
\end{assumption}}

\begin{theorem}
\label{t:main}
Let $f_0$ and $\F$ satisfy Assumptions  \ref{asm:f0} and \ref{asm:F}, $\mu^\gamma = \textup{Unif}[0,1]^d$, and $\mu^\xi = \mathcal{N}(0,I_d)$. Let $f\in C([0,T],\mathcal{P}_q(\Rd))$ be a measure solution to \eqref{eq:boltzmann} in duality with $C_b(\Rd)$ (see Definition \ref{def:solution}) with initial data $f_0$, and time horizon $T>0$.

\rev{The empirical measure $f^N_n$ constructed by Algorithm \ref{alg:ga}
satisfies
\[
\sup_{t_n = n\tau\leq T} \mathbb{E} \| f^N_n - f(t_n)\|_{\BL} \leq C  T e^{C'  T}  \left( \,  \ve_1(N) \,+ \,\tau\, \right)
\]
with $C = C(\lip(\F), \underline{\F},\overline{\F}, d,q, M_q(f_0)^{1/q}, \sigma), C' = C'(\lip(\F), \underline{\F},\overline{\F}, q)$ positive constants. }
\end{theorem}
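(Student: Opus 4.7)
The plan is to compare $f_n^N$ with $f(t_n)$ via an intermediate deterministic iteration and combine three estimates: a time-discretization bias, a one-step statistical fluctuation, and a propagation bound. I would introduce the time-discrete nonlinear Nanbu iteration
\[
\tilde f_{n+1} = (1-\tau)\,\tilde f_n + \tau\, Q^+(\tilde f_n,\tilde f_n),\qquad \tilde f_0 = f_0,
\]
and apply the triangle inequality
\[
\mathbb{E}\|f_n^N - f(t_n)\|_{\BL} \leq \mathbb{E}\|f_n^N - \tilde f_n\|_{\BL} + \|\tilde f_n - f(t_n)\|_{\BL}.
\]
The second, deterministic summand is an explicit-Euler consistency error, bounded by $C\tau T e^{C'T}$ by a Grönwall argument driven by the Lipschitz estimate for $Q^+$ in $\BL$ discussed below and the regularity in time of the continuous solution $f$.

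The stochastic term is handled by a one-step decomposition. Algorithm~\ref{alg:ga} is designed precisely so that its one-step conditional mean reproduces the Nanbu step: writing $\mathcal{F}_n$ for the history up to iteration $n$,
\[
\mathbb{E}[f_{n+1}^N \mid \mathcal{F}_n] = (1-\tau)\,f_n^N + \tau\, Q^+(f_n^N,f_n^N).
\]
Splitting
\[
\|f_{n+1}^N - \tilde f_{n+1}\|_{\BL} \leq \bigl\|f_{n+1}^N - \mathbb{E}[f_{n+1}^N\mid\mathcal{F}_n]\bigr\|_{\BL} + \bigl\|\mathbb{E}[f_{n+1}^N\mid\mathcal{F}_n] - \tilde f_{n+1}\bigr\|_{\BL},
\]
the first piece measures the deviation of $N$ conditionally independent samples from their common law; Lemma~\ref{l:rate} applied conditionally, together with a propagation-of-moments estimate for $M_q(f_n^N)$ under iterated crossover-mutation (using boundedness of $\mu^\gamma$ and Gaussianity of $\mu^\xi$), bounds it in expectation by $C\,\ve_1(N)$. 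The second piece equals $(1-\tau)(f_n^N - \tilde f_n) + \tau\bigl(Q^+(f_n^N,f_n^N) - Q^+(\tilde f_n,\tilde f_n)\bigr)$ as a signed measure, and is controlled by a Lipschitz bound on $Q^+$.

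The crucial ingredient is the estimate
\[
\|Q^+(\mu,\mu) - Q^+(\nu,\nu)\|_{\BL} \leq L\,\|\mu-\nu\|_{\BL},\qquad L = L(\lip(\F),\underline{\F},\overline{\F}),
\]
and this is where the optimal-transport viewpoint pays off. Given any $1$-Lipschitz test function $\phi$ with $\|\phi\|_\infty\leq 1/2$, I would take an $\epsilon$-optimal Kantorovich–Rubinstein coupling $\pi$ of $\mu$ and $\nu$, sample two independent pairs $(x,y),(x_*,y_*)\sim\pi$, a \emph{shared} crossover $\gamma\sim\mu^\gamma$ and a \emph{shared} mutation $\xi\sim\mu^\xi$, and build offspring $x'=(1-\gamma)\odot x + \gamma\odot x_* + \sigma\xi$ with its counterpart $y'$. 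The convex-combination structure of crossover gives the componentwise bound $|x'-y'|\leq (1-\gamma)\odot|x-y|+\gamma\odot|x_*-y_*|$, the shared mutation cancels, and the transport cost transfers linearly. The measure-dependent selection density $\F(x)\F(x_*)/\langle\F,\mu\rangle^2$ differs between $\mu$ and $\nu$, but its variation is controlled using $\lip(\F)$ and $\underline{\F}>0$, contributing only an extra $\|\mu-\nu\|_{\BL}$-term absorbed into $L$.

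Combining the three estimates yields a discrete Grönwall recursion of the form $a_{n+1}\leq (1+C'\tau)\,a_n + C\tau(\ve_1(N)+\tau)$, which iterated over $n\tau\leq T$ gives the stated bound. The main obstacle I expect is precisely the Lipschitz estimate for $Q^+$ in $\BL$: the measure-dependent normalization $\langle\F,\mu\rangle^{-2}$ in the selection density prevents a direct lift of an optimal plan to an offspring coupling, and preserving the $1/2$-bound on test functions built into $\BL$ while simultaneously absorbing the $\F$-reweighting forces a careful dual argument rather than a single change of variables.
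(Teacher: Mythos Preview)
Your decomposition via the conditional mean $\mathbb{E}[f^N_{n+1}\mid\mathcal{F}_n]$ has a genuine gap at the fluctuation step, and this is exactly the difficulty the paper's particle-level coupling is built to avoid. Two points:

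First, the particles $X^i_{n+1}$ are \emph{not} conditionally i.i.d.: given $\mathcal{F}_n$, the law of $X^i_{n+1}$ is $(1-\tau)\delta_{X^i_n}+\tau\,Q^+(f^N_n,f^N_n)$, which depends on $i$ through the ``stay'' atom $X^i_n$. Lemma~\ref{l:rate} therefore does not apply to the pair $(f^N_{n+1},\mathbb{E}[f^N_{n+1}\mid\mathcal{F}_n])$.

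Second, and more seriously, your recursion is inconsistent. You bound the fluctuation by $C\,\ve_1(N)$ but then write the Gr\"onwall step as $a_{n+1}\le(1+C'\tau)a_n+C\tau(\ve_1(N)+\tau)$. The $\tau$ in front of $\ve_1(N)$ is not produced by your argument: the honest recursion from your split is $a_{n+1}\le(1+C'\tau)a_n+C\,\ve_1(N)$, which after $n\le T/\tau$ iterations gives $a_n\lesssim \ve_1(N)/\tau$ and blows up as $\tau\to 0$. Trying to recover a $\tau$ factor by exploiting that only a $\tau$-fraction of particles move gives at best $O(\sqrt{\tau/N})$-type bounds for fixed test functions, which neither matches $\tau\,\ve_1(N)$ nor survives the supremum over Lipschitz $\phi$ in dimension $d>2$.

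The paper obtains the crucial $\tau$ factor by coupling each algorithmic particle $X^i_n$ with an auxiliary i.i.d.\ particle $\overline{X}^i_n\sim f_n$ that shares the \emph{same} Bernoulli clock $\tau^i_n$, the \emph{same} $\gamma^i_n,\xi^i_n$, and whose parents are drawn via a map $X^*_n(\alpha)$ constructed (Lemma~\ref{l:coupling}) to be an optimal $\WW$-coupling with the weighted empirical parent $X^{\j(\w_n,\alpha)}_n$. Because both particles stay put simultaneously when $\tau^i_n=0$, one gets $\mathbb{E}\D(X^i_{n+1},\overline{X}^i_{n+1})\le(1-\tau)\mathbb{E}\D(X^i_n,\overline{X}^i_n)+2\tau C_\F\,\mathbb{E}\WW(f_n,f^N_n)$; the concentration Lemma~\ref{l:rate} is then applied to the genuinely i.i.d.\ system $\overline{\X}_n$ (with deterministic moment bounds on $f_n$), and the $\tau$ in the increment is what makes the Gr\"onwall iteration close with the stated rate. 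Your Lipschitz estimate for $Q^+$ is correct and is used in the paper as well, but at the measure level alone it cannot supply the missing factor.
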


Notation and preliminary lemmas are presented in Section \ref{sec:pre}, while the proof is given in Section \ref{sec:main}. First, though, we provide more context on the analysis of GA algorithms and the novelty of our approach.

\begin{algorithm}[t]
\caption{Genetic Algorithm}
\label{alg:ga}
\begin{algorithmic}
\STATE{\textbf{Input:} Fitness function $\mathbf{F}$, population size $N$, initial distribution $f_0\in \mathcal{P}(\Rd)$, $\tau \in (0,1]$, maximum iterations $n_{\max}$.}
\STATE{Initialize $N$ solutions $X^i_0\sim f_0$, $i = 1, \dots,N$}
\STATE{$n = 0$}
\WHILE{$n \leq n_{\max}$}
\FORALL{$i = 1, \dots, N$}
\STATE{with probability $\tau:$}
\STATE{\hspace{0.8cm}  sample first parent solution $X$ according to
$\mathbb{P}(X = X^j_n) = \frac{\F(X^j_n)}{\sum_{\ell=1}^N \F(X^\ell_n)}$}  
\STATE{\hspace{0.8cm}  sample second parent $X_*$ in the same way}
\STATE{\hspace{0.8cm}  sample $\gamma \sim \textup{Unif}[0,1]^d$, $\xi \sim \mathcal{N}(0, I_d)$}
\STATE{\hspace{0.8cm}  $\tilde{X}^i_{n} = (1 - \gamma)\odot X + \gamma \odot X_* + \sigma \xi$}
\STATE{else:}
\STATE{\hspace{0.8cm}  $\tilde{X}_{n}^i = X_{n}^i$ }
\ENDFOR
\STATE{$X^i_{n+1} = \tilde{X}^i_n$ for all $i = 1,\dots,N$}
\STATE{$n = n+1$}
\ENDWHILE
\end{algorithmic}
\end{algorithm}

\subsection{Literature review and novelty}
The connection between interacting particle systems and genetic algorithms   has been extensively studied in the series of works \cite{delmoral1999stability, delmoral2000branching,delmoral2000moran,delmoral2001modeling}. GA is viewed as a numerical Monte Carlo approach to solve Feynman--Kac formulae \cite{delmoral2004book} which is a class of more general dynamics also arising in nonlinear filtering methods and Sequential Monte Carlo samplers \cite{delmoral2006sequential}. To obtain this interpretation, only the selection-mutation mechanism (without crossover) is considered.  The dynamics is studied both in discrete and continuous time settings, see for example \cite{delmoral2000branching}. The authors address both stability with respect to the many particle limit $N\to\infty$, and long-time stability of the resulting measure-valued iteration. Such convergence analysis is typically based on martingales and semi-group techniques, the use of \rev{Total Variation norm}, and Zolotarev semi-norms \cite{rachev1991probability, delmoral2025kantorovich}. 
Oftentimes the fitness is assumed to be bounded as in Assumption \eqref{asm:F}, see for instance \cite[Condition ($\mathcal{G}$), Eq. (22)]{delmoral2000branching}.
The mechanism of crossover, where two selected particles are mixed to create a new one, is to our knowledge only considered in \cite{delmoral2001modeling} by doubling the state space (in our setting, $\Rd \times \Rd$) in order not to break the Feynman--Kac-type description. 

The analysis we present here follows a complementary approach based on optimal transport theory which naturally includes the crossover mechanism as a binary interaction. It is related to the propagation of chaos \cite{sznitman1991topics,diez2022review1,diez2022review2} properties of Nanbu-type Monte Carlo schemes   for the homogeneous Boltzmann equation \cite{nanbu1980}. As in  \cite{borghi2025kinetic}, we interpret the crossover-mutation mechanism as a collision between particles. This connection was already noted in \cite{delmoral2000moran}, but not used in the convergence analysis.  Chaos propagation for generalized Boltzmann models have been studied, for instance, in  \cite{graham1997stochastic,cortez2016}, but with a linear selection kernel, and not a nonlinear one as it appears in \eqref{eq:boltzmann}. 

We provide a concise proof of Theorem \ref{t:main} by exploiting both the primal and dual formulation of \rev{a $L^1$-Wasserstein distance  $\WW$ with cost given by 
\begin{equation}\label{eq:dist}
\D(x,y) := |x- y| \wedge 1 = \min\{|x - y|, 1\}\,,
\end{equation}
which coincides with the BL metric, $\WW(f,g): = \|f - g\|_{\BL} $, between two probability measures $f,g$, see Section \ref{sec:pre} for more details. The distance $\D$ sits in between $|x - y|$ and $\bm{1}_{x\neq y} = \delta(x - y)$, leading to a metric which inherits benefits from both the usual Wasserstein-1 metric, and the Total Variation norm.
With the optimal transport formulation, we treat the crossover-mutation mechanism, thanks to the Lipschitzianity of the collision \eqref{eq:coll}, while with the dual one we show stability of the selection procedure.} The technique allows to optimally couple the parent particles at each step, similar to the technique employed in \cite{fournier2016nanbu,cortez2016}, so that the  convergence rate of i.i.d. particles is not lost. The key argument is Lemma \ref{l:coupling} which is a modified version of \cite[Lemma 4.1]{borghi2025nanbu} for weighted particles. \rev{The distance \eqref{eq:dist} has been used in \cite{hairer2008spectral,fontbona2022quantitative,cao2026quantitative}, while generalizations beyond probability measures have been considered in \cite{piccoli2014generalized,piccoli2019signed}.} 

As the Nanbu Monte Carlo method is based on the explicit Euler discretization of the Boltzmann equation, this is what Algorithm \ref{alg:ga} converges to as $N\to \infty$. Therefore, our propagation of chaos result also holds in the time-discrete settings considered in \cite{delmoral1999stability, delmoral2000branching,delmoral2001modeling} by considering $\tau = 1$ ($\tau$ being the time step of the Euler discretization). 
In Theorem \ref{t:main}, we also included convergence  to the time continuous PDE as $\tau \to 0$  for completeness. The latter result though is more standard, as well as the existence of solutions to Boltzmann-type equations \cite{mischler1999euler,horsin2003convergence,borghi2025nanbu}, which instead we omitted for brevity. Finally, we mention that convergence of \eqref{eq:boltzmann} towards global solutions of the optimization problem $\max_x \F(x)$  has been studied in \cite{borghi2025kinetic}. 

\section{Notation and preliminaries}
\label{sec:pre}

Random variables are assumed to be defined with respect to an underlying probability space $(\Omega, \mathcal{F}, \mathbb{P})$. $X\sim f$ means that $X$ is a random variable with law $f$. 
For a given Borel set $A\subseteq\Rd$, $\textup{Unif}(A)$ is the uniform probability measure over $A$. For $\tau \in (0,1)$, we have $\textup{Bern}(\tau) = (1 - \tau) \delta_0 + \tau \delta_1$ with $\delta_x$ the Dirac delta measure centred at $x$.  The indicator function for a given set $A$ is $\bm{1}_A$. We will sometimes denote $\{1,\dots, N\}$ with $[N]$, and $\Delta_N$ is the unit simplex  $\Delta_N = \{\w\in \RR^N\,|\,\sum_i w^i = 1, w\geq0 \}$, with $\textup{int}(\Delta_N) = \Delta_N \cap \RR_{>0}^N$ the interior points where weights are strictly positive. \rev{We use $a \wedge b : = \min(a,b)$, and $a\vee b = \max\{a,b\}$ for the minimum and maximum operations}.

\subsection{Wasserstein distances}

With $\mathcal{P}(\Rd)$ we denote the set of Borel probability measures over $\Rd$, and for $\phi: \Rd \to \RR$ we write $\langle \phi, f\rangle = \int_{\Rd} \phi(x) f(\d x)$.
Let $M_p(f):=\int |x|^p f(\d x), p \geq 1$ be the $p$-th moment of $f$, and  $\mathcal{P}_p(\Rd)$ be the subset of probabilities with finite $p$-th moments.
\rev{For a given cost function $c$, and $p \geq 1$, we consider the Wasserstein distances \cite{villani2009}
\[
\W_{p}^c(f,g) =  \left ( \inf_{X\sim f, Y\sim g}\mathbb{E} \left[c(X,Y)^p\right] \right)^{1/p}.
\]%
We write $f\in C([0,T],\mathcal{P}_p(\Rd))$ if the curve $f(t), t\in[0,T],$ is continuous with respect to $\W_p^{|\cdot|}$ (the usual $L^p$-Wasserstein distance). 

Let $\textup{Lip}^{d}(\phi) := \sup_{x\neq y} |\phi(x) - \phi(y)|/d(x,y)$ denote the Lipschitz constant of a function $\phi$ with respect to a distance $d$. By Kantorovich--Rubinstein duality \cite[Theorem 1.14]{villani2009}, when the cost  $c$ is a distance, it holds
\[
\W_{1}^c(f,g) = \sup\left \{  \langle \phi, f - g\rangle\, \middle |\, \textup{Lip}^c(\phi) \leq 1\right \}\,.
\]
If $c(x,y) = |x - y|$ one gets that $\W_1^{|\cdot|}$ corresponds to the Kantorovich--Rubinstein (KR) norm (duality with $\lip(\phi) := \lip^{|\cdot|}(\phi)\leq 1$), while if $c(x,y) = \delta(x - y) = \bm{1}_{x \neq y}$,  $\W^\delta_1$ corresponds the Total-Variation (TV) norm (duality with $\|\phi\|_\infty := \sup_x\phi(x) \leq 1$), see, for instance, \cite[Section 3]{piccoli2016properties}.

For the distance function \eqref{eq:dist} of our interest, $\D(x,y) = |x - y| \wedge 1$, condition $\lip^\D(\phi)\leq 1$ leads to 
\[
|\phi(x) - \phi(y)| \leq \D(x,y) = |x-y|\wedge 1\,\qquad \textup{for all}\quad x, y \in \Rd\,.
\]
Therefore, a test function $\phi$ needs to satisfy $\lip(\phi) = \lip^{|\cdot|}(\phi) \leq 1$, and  $\textup{osc}(\phi) := \sup\phi - \inf\phi \leq 1$. Since $\langle \phi, f - g\rangle$ is invariant under translations, condition $\textup{osc}(\phi)\leq 1$ is equivalent to  $\|\phi \|_{\infty}\leq 1/2$, as in the definition of the BL norm \eqref{eq:BL}.
Therefore, we obtain that for 
\[\WW(f,g)  := \W^\D_1(f,g) = \inf_{X\sim f, Y\sim g}\mathbb{E}\left[|X - Y|\wedge 1 \right]\]
it holds
\[\WW(f,g) = \|f - g\|_\BL\,.\]
Moreover, since $\D(x,y) \leq |x - y|$, we also have $\WW \leq \W_1^{|\cdot|}$. Generalisations to this optimal transport formulation of the BL norm have been studied in \cite{piccoli2014generalized,piccoli2019signed}.}


\subsection{Auxiliary lemmas}

\rev{We collect some definitions and auxiliary results that will be used throughout the paper.

\begin{definition}[Measure solution to \eqref{eq:boltzmann}] \label{def:solution}
Given $ T\in (0, \infty)$, we say $f\in C([0,T], \mathcal{P}_q(\Rd))$ is a measure solution to \eqref{eq:boltzmann} with initial data $f_0\in \mathcal{P}_q(\Rd)$ if for any $\phi \in C_b(\Rd)$ we have $\lim_{t\to 0^+} \langle \phi, f(t)\rangle = \langle \phi, f_0\rangle$ and
\[
\frac{\d}{\d t}\langle \phi, f(t) \rangle = \langle \phi, Q^+(f(t), f(t))\rangle - \langle \phi, f(t)\rangle \qquad \textup{for a.e.}\quad t \in (0,T)\,.
\]
\end{definition}}

We recall the empirical concentration inequality from \cite[Theorem 1]{fournier2015rate}, in the specific case of \rev{Wasserstein distance $\W^{|\cdot|}_1$}, and under the assumption of having sufficient finite moments. \rev{Note that since $\WW\leq \W_1^{|\cdot|}$, the same inequalities holds of the BL norm as well.}

\begin{lemma}[Concentration rate] \label{l:rate}
Let $f\in \mathcal{P}_q(\Rd)$ with $q>0$ as in Assumption \ref{asm:f0},
and $f^N = (1/N) \sum_{i=1}^N \delta_{X^i}$ with $X^i\sim f$ i.i.d.. There exists a constant $C = C(d,q)>0$ such that for all $N\geq1$:
\[
\mathbb{E}\rev{\W^{|\cdot|}_1}(f,f^N) \leq C M_q^{1/q}(f)\ve_1(N)
\]
with 
\[
\ve_1(N) := 
\begin{cases}
N^{-1/2} & \textup{if}\;\; d = 1\,, \\
N^{-1/2}\log(1 + N) & \textup{if}\;\; d = 2\,, \\
N^{-1/d} & \textup{if}\;\; d > 2 \,.\\
\end{cases}
\]
\end{lemma}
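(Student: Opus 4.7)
The plan is to combine a truncation argument with a hierarchical (dyadic) transport coupling, which is the standard route for concentration of empirical measures in Wasserstein distance. The proof splits naturally according to the dimension, so I would first reduce to bounding $\mathbb{E}\W_1^{|\cdot|}(f,f^N)$ on a ball $B_R$ of radius $R$, and then optimize $R$.

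\textbf{Step 1: tail estimate.} Let $\tilde{f}_R = f|_{B_R}/f(B_R)$ and $\tilde{f}^N_R$ the analogous restriction of the empirical measure. By Markov's inequality $f(B_R^c) \leq M_q(f)/R^q$, and the mass lying outside $B_R$ contributes a transport cost at most of order $R \cdot f(B_R^c) + M_1(f \bm{1}_{B_R^c})$. Using Hölder's inequality in the form $\int_{B_R^c}|x| f(\d x) \leq M_q(f)^{1/q} f(B_R^c)^{1-1/q}$, the tail cost is controlled by $M_q(f)^{1/q} R^{-(q-1)}$ (up to harmless constants). The condition $q > d/(d-1)$ is exactly what makes this tail decay faster than the bulk rate we derive below, allowing the two contributions to be balanced.

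\textbf{Step 2: dyadic decomposition on $B_R$.} Partition the ball into a nested family of cubes $\{Q_{\ell,k}\}$ of diameter $\sim R \cdot 2^{-\ell}$, with $2^{\ell d}$ cubes at level $\ell$, for $\ell = 0, 1, \dots, L$. I would construct a coupling $\pi$ between $f^N$ and $f$ hierarchically: at each level $\ell$, mass is rearranged inside every parent cube to match the child-cube marginals; the transport cost at level $\ell$ is then
\[
\mathrm{cost}_\ell \lesssim R \cdot 2^{-\ell} \sum_{k} |f^N(Q_{\ell,k}) - f(Q_{\ell,k})|\,.
\]
Since $N f^N(Q_{\ell,k})$ is $\text{Binomial}(N, f(Q_{\ell,k}))$, one has $\mathbb{E} |f^N(Q) - f(Q)| \leq \sqrt{f(Q)/N}$, and Cauchy--Schwarz across the $2^{\ell d}$ cubes gives $\sum_k \sqrt{f(Q_{\ell,k})/N} \leq \sqrt{2^{\ell d}/N}$. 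Summing over levels,
\[
\mathbb{E} \W_1^{|\cdot|}(f|_{B_R}, f^N|_{B_R}) \;\lesssim\; R \sum_{\ell=0}^{L} 2^{-\ell}\sqrt{2^{\ell d}/N} \;+\; R \cdot 2^{-L}\,.
\]
The last term is the residual cost of matching mass within cubes at the finest level.

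\textbf{Step 3: optimization.} Choosing $L$ and $R$ to balance the geometric sum, the truncation error, and the in-cube residual yields exactly $\ve_1(N)$: when $d>2$ the geometric series is dominated by its last term and picking $2^{-L} \sim N^{-1/d}$ gives rate $N^{-1/d}$; when $d=2$ all levels contribute equally, producing the logarithmic factor $\log(1+N)$; when $d=1$ the series converges and gives $N^{-1/2}$ (this case is also available directly from $W_1(f,f^N) = \int |F(x) - F^N(x)|\,\d x$ together with $\mathbb{E}|F(x)-F^N(x)| \leq \sqrt{F(x)(1-F(x))/N}$). The radius $R$ is then optimized against $M_q(f)^{1/q} R^{-(q-1)}$ from Step 1; the moment assumption on $q$ is tuned precisely so that the tail does not dominate.

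\textbf{Main obstacle.} The delicate point is the simultaneous optimization in $R$ and $L$ while keeping the constant homogeneous of the right degree in $M_q(f)^{1/q}$. The dyadic bookkeeping is elementary but cumbersome, and the matching lower bound on the admissible $q$ in the borderline case $d\leq 2$ versus $d>2$ arises solely from balancing the tail exponent $-(q-1)$ against the bulk rates $N^{-1/2}$ and $N^{-1/d}$. Since the statement of the lemma is quoted verbatim from \cite[Theorem~1]{fournier2015rate}, I would in practice just cite that reference, noting that it covers precisely the $p=1$ case stated here and that $\WW \leq \W_1^{|\cdot|}$ transfers the bound to the BL norm used in Theorem~\ref{t:main}.
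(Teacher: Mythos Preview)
The paper does not prove this lemma at all: it is simply quoted from \cite[Theorem~1]{fournier2015rate} and used as a black box, exactly as you suggest at the end of your proposal. Your sketch of the truncation-plus-dyadic-coupling argument is a faithful outline of the Fournier--Guillin proof and is correct in its essentials (the homogeneity in $M_q(f)^{1/q}$ is most cleanly obtained by first rescaling so that $M_q(f)=1$, which fixes the minor exponent slip in Step~1), so in practice citing the reference is both sufficient and what the paper itself does.
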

\rev{
\begin{lemma}[Crossover-mutation stability]\label{l:lipschitz}
For any fixed $\gamma\in [0,1]^d$ and $\sigma>0, \xi\in \Rd$, the crossover-mutation update \eqref{eq:coll} is Lipschitz in  $x,x_*$ with respect to the metric $\D$.
\end{lemma}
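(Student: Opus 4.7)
The plan is to show that the map $(x, x_*) \mapsto x' := (1-\gamma)\odot x + \gamma\odot x_* + \sigma \xi$ is $1$-Lipschitz with respect to the product metric $\D(x,y)+\D(x_*,y_*)$ on $\Rd\times\Rd$. Since $\gamma$ and $\sigma\xi$ are held fixed, the two additive shifts cancel when comparing updates from two different parent pairs, which reduces everything to controlling the convex combination.

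First, I would fix $(x,x_*),(y,y_*)\in \Rd\times\Rd$ and write down
\[
x'-y' \,=\, (1-\gamma)\odot (x-y)\,+\,\gamma\odot(x_*-y_*),
\]
the mutation term $\sigma\xi$ cancelling. Since every component $\gamma_k$ lies in $[0,1]$, the coordinate-wise bound $|((1-\gamma)\odot z)_k|\leq|z_k|$ holds for $z=x-y$, and similarly $|(\gamma\odot z)_k|\leq |z_k|$ for $z=x_*-y_*$. Applying the Euclidean triangle inequality then yields
\[
|x'-y'|\,\leq\,|(1-\gamma)\odot(x-y)|+|\gamma\odot(x_*-y_*)|\,\leq\,|x-y|+|x_*-y_*|.
\]

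Second, I would invoke the elementary subadditivity of the truncation $a\mapsto a\wedge 1$ on $[0,\infty)$, namely $(a+b)\wedge 1\leq (a\wedge 1)+(b\wedge 1)$ for all $a,b\geq 0$ (a short case distinction on whether $a+b\leq 1$ or not settles it). Combining this with the previous step gives
\[
\D(x',y')\,=\,|x'-y'|\wedge 1\,\leq\,(|x-y|+|x_*-y_*|)\wedge 1\,\leq\,\D(x,y)+\D(x_*,y_*),
\]
which is exactly the claim.

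There is no real obstacle here; the only subtlety worth flagging is the use of the truncation-subadditivity inequality, which is what allows the bound to survive passing from the Euclidean norm to the bounded metric $\D$ without losing a multiplicative constant. Note also that the same proof shows $1$-Lipschitzness with respect to the product metric $\D(x,y)\vee\D(x_*,y_*)$ up to a factor of $2$, so either convention for the product metric works downstream in the coupling argument of Lemma \ref{l:coupling}.
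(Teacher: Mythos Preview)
Your proposal is correct and follows essentially the same argument as the paper: cancel the mutation shift, use $|\gamma\odot z|\leq|z|$ for $\gamma\in[0,1]^d$ together with the triangle inequality to get $|x'-y'|\leq|x-y|+|x_*-y_*|$, and then apply the subadditivity $(a+b)\wedge1\leq(a\wedge1)+(b\wedge1)$ to pass to $\D$. The paper states exactly these two steps without further commentary.
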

\begin{proof}
Since $\gamma\in [0,1]^d$, we have $|\gamma \odot z| \leq |z|$ for any $z\in \Rd$. Then, consider $x'$ and $y'$ defined as in \eqref{eq:coll}, we have
\begin{align*}
| x' - y'| &  = \left |(1 - \gamma)\odot x + \gamma \odot x_* + \xi \sigma - \Big((1 - \gamma)\odot y + \gamma \odot y_* + \xi \sigma\Big) \right| \\
& = |(1- \gamma) \odot (x - y) + \gamma \odot(x_* - y_*)| \leq |x - y| + |x_* - y_*|\,.
\end{align*} 
Then, Lipschitzianity with respect to $\D(x,y) = |x - y|\wedge 1$ follows by
\[
 |x' - y'|\wedge 1 \leq (|x - y| + |x_* - y_*|)\wedge 1 \leq |x - y|\wedge 1 + |x_* - y_*|\wedge 1 \,.
\]
\end{proof}}%
\rev{
\begin{lemma}[Selection stability]
\label{l:stability}
Let $\F$ satisfy Assumption \ref{asm:F}. There exists a positive constant $C_\F = C_\F(\underline{\F}, \overline{\F}, \lip(\F))$ such that  
\[
\WW \left(\frac{\F f}{\langle \F,f\rangle},\frac{\F g}{\langle \F,g\rangle}\right) \leq C_\F \WW(f,g)
\quad \textup{for any}\quad f,g\in \mathcal{P}(\Rd)\,.
\]
\end{lemma}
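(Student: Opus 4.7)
The strategy is to work in the dual (Kantorovich--Rubinstein) formulation of $\WW$. Fix an arbitrary test function $\phi$ with $\lip(\phi)\leq 1$ and $\|\phi\|_\infty \leq 1/2$, and I would aim to bound $\langle \phi, \F f/\langle \F,f\rangle - \F g/\langle \F,g\rangle\rangle$ by $C_\F\,\WW(f,g)$ with constant uniform in $\phi$; then take the supremum over $\phi$ to conclude.

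The standard algebraic add--subtract trick reduces the problem to two linear functionals acting on the signed measure $f-g$:
\begin{equation*}
\frac{\langle \phi\F, f\rangle}{\langle \F,f\rangle} - \frac{\langle \phi\F,g\rangle}{\langle \F,g\rangle}
= \frac{\langle \phi\F, f-g\rangle}{\langle \F,f\rangle}
- \frac{\langle \phi\F,g\rangle}{\langle \F,g\rangle}\cdot\frac{\langle \F, f-g\rangle}{\langle \F,f\rangle}\,.
\end{equation*}
Assumption \ref{asm:F} controls the two denominators from below by $\underline{\F}$, and the ratio $\langle \phi\F,g\rangle/\langle \F,g\rangle$ is bounded in absolute value by $\|\phi\|_\infty \leq 1/2$. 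Thus it only remains to estimate the two quantities $|\langle \phi\F,f-g\rangle|$ and $|\langle \F, f-g\rangle|$ in terms of $\WW(f,g) = \|f-g\|_\BL$.

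The key observation is that the BL duality is stable under rescaling: for any $\psi:\Rd\to\RR$ with $\lip(\psi)\leq L_1$ and $\|\psi\|_\infty \leq L_2$, the function $\psi/\max(L_1, 2L_2)$ is admissible in \eqref{eq:BL}, yielding $|\langle \psi, f-g\rangle|\leq (L_1 + 2L_2)\,\|f-g\|_\BL$. Applied to $\psi=\F$ this gives $L_1 = \lip(\F)$, $L_2 = \overline{\F}$; applied to $\psi = \phi\F$, the product rule gives $\lip(\phi\F) \leq \overline{\F}\lip(\phi) + \|\phi\|_\infty\lip(\F) \leq \overline{\F} + \lip(\F)/2$ and $\|\phi\F\|_\infty \leq \overline{\F}/2$. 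Both bounds are then independent of $\phi$, and substituting back yields the claim with a constant $C_\F$ that is an explicit rational function of $\underline{\F}$, $\overline{\F}$, and $\lip(\F)$.

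The main obstacle is really just bookkeeping: one must make sure every occurrence of $\phi$ in the bound has been replaced by a $\phi$-independent constant before taking the supremum, which is why the $\|\phi\|_\infty \leq 1/2$ condition built into the BL norm \eqref{eq:BL} (as opposed to an unbounded KR test) is crucial -- it is precisely what prevents $\lip(\phi\F)$ from blowing up and makes the estimate for $\psi=\phi\F$ go through with a universal constant. Without the $\|\phi\|_\infty$ bound, one would only obtain a KR stability estimate under additional assumptions on $\F$ (for instance compact support of the measures or exponential tails), so the choice of the BL/flat metric is exactly tailored to the selection operator.
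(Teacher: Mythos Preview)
Your proposal is correct and follows essentially the same approach as the paper: both work in the BL dual formulation, split the difference via the same add--subtract identity (the paper writes it as a triangle inequality $I_1+I_2$, you write the exact algebraic identity, but the two terms coincide), and then estimate $\langle \phi\F,f-g\rangle$ and $\langle \F,f-g\rangle$ by rescaling these functions into admissible BL test functions using Assumption~\ref{asm:F}. The only cosmetic differences are that the paper uses $\lip(\phi\F)\vee 2\|\phi\F\|_\infty$ where you use the sum, and the paper tracks the explicit constant $C_\F=\underline{\F}^{-1}(1+\overline{\F}/\underline{\F})(\lip(\F)+2\overline{\F})$.
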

\begin{proof}
We use the dual formulation \eqref{eq:BL} of $\WW$, and so consider a test function $\phi$ with $\lip(\phi)\leq 1$ and $\|\phi\|_{\infty}\leq 1/2$. By triangular inequality we have
\begin{align*}
\left\langle \phi, \frac{\F f}{\langle \F,f\rangle} - \frac{\F g}{\langle \F,g\rangle}\right \rangle \leq 
\left | 
\left\langle \phi, \frac{\F f}{\langle \F,f\rangle} - \frac{\F g}{\langle \F,f\rangle}\right \rangle
\right| + 
\left | 
\left\langle \phi, \frac{\F g}{\langle \F,f\rangle} - \frac{\F g}{\langle \F,g\rangle}\right \rangle
\right| =: I_1 + I_2\,.
\end{align*}
Since dividing $\phi \F$ by $\lip(\phi \F) \vee 2\|\phi \F\|_\infty$ turns it into a test function for the BL norm, it holds
\[\langle \phi \F, f - g \rangle \leq \left( \lip(\phi \F) \vee 2\|\phi \F\|_\infty \right)\| f - g\|_{\BL}\,.
\]
Given that $\|\phi \F \|_{\infty} \leq \overline {\F}/2$ and $\lip(\phi \F) \leq \lip(\F)/2 + \overline{\F}$, we get the following bound for $I_1$
\begin{equation*}
I_1 \leq \frac1{\underline{\F}}|\langle \phi, \F f - \F g\rangle |
 \leq \frac{\lip(\phi \F) \vee 2\|\phi \F \|_\infty} {\underline{\F}}\| f -  g\|_{\BL}
 \leq \frac{ (\lip(\F)/2 + \overline{\F}) }{\underline{\F}}\| f -  g\|_{\BL}\,,
\end{equation*}
where we also used $\langle \F, f\rangle \geq \underline{\F}$ in the first inequality.
Using the same type of  estimates, we have for $I_2$
\begin{alignat*}{3}
I_2  &= \left | 
\left\langle \phi, \frac{\F g}{\langle \F,f\rangle} - \frac{\F g}{\langle \F,g\rangle}\right \rangle
\right| 
 & & =  \left( \frac{1}{\langle \F,f\rangle} - \frac{1}{\langle \F,g\rangle}\right ) \left | 
\left\langle \phi,  \F g \right \rangle
\right| \\
& \leq \frac{|\langle \F,g\rangle - \langle \F,f\rangle|}{\langle \F,f\rangle \langle \F,g\rangle} \| \phi \F \|_{\infty} 
&&
\leq \frac{\overline{\F} (\lip(\F) \vee 2\overline{\F}) }{2\underline{\F}^2} \| f - g\|_{\BL}\,.
\end{alignat*}
By collecting the two bounds, and simplifying slightly, one obtains the desired estimate with 
\[C_\F := 
\frac1{\underline{\F}}\left( 1 + \frac{\overline{\F}}{\underline{\F}} \right)(\lip(\F) +  2\overline{\F})\,.
\]
\end{proof}}%

\section{Proof of Theorem \ref{t:main}}
\label{sec:main}

\subsection{Two particle systems} 
The proof is based on a coupling method where we couple the particle system $\X_n = (X^1_n, \dots, X^N_n), n \geq 0,$ generated by Algorithm \ref{alg:ga} with a system of nonlinear i.i.d. particles $\overline{\X} = (\OX_n^1,\dots,\OX^N_n), n \geq 0,$ whose law is given by the explicit time-discretization of the Boltzmann-type equation \eqref{eq:boltzmann}
\begin{equation} \label{eq:euler}
f_{n+1} = (1 - \tau) f_n + \tau Q^+(f_n, f_n)\,.
\end{equation}
We start by re-writing the update of $\X_n$ via auxiliary random variables.

Given the particles' state $\X_n$, to define $\X_{n+1}$ we sample for each particle five independent random variables. The first two are $\gamma^i_n \sim \textup{Unif}[0,1]^d$ and $\xi_n^i\sim \mathcal{N}(0, I_d)$, as in Algorithm \ref{alg:ga}. The third is a Bernoulli random variable $\tau_n^i \sim \textup{Bern}(\tau)$ which determines whether the particle is updated at  step $n$ or not. The last two, $\alpha_n^{i,1}, \alpha_n^{i,2}\sim \textup{Unif}[0,N)$, are used to determine which parent particles generate the new offspring.
Let $\w_n = (w_n^1, \dots, w^N_n)$, $w^i_n = \F(X^i_n)/ \sum_\ell \F(X^\ell_n)$ be the fitness-dependent weights. The parents are determined via the index map
\begin{equation} \label{eq:j}
\begin{split}
\j({\w}, \cdot) &: [0,N) \to \{1,\dots,N\} \\
\j(\w, \alpha)&:=\min\Big\{\,j\in\{1,\dots,N\}: \alpha<N \sum_{\ell=1}^j w^\ell\,\Big\}\,.
\end{split}
\end{equation}
Note that $\mathbb{P}(\j(\w,\alpha) = j) = w^j$ if $\alpha \sim \unif[0,N)$.
The particles evolution described by Algorithm \ref{alg:ga} can then be written compactly as
\begin{equation} \label{eq:nanbu}
X_{n+1}^i = (1 - \tau^i_n) X^i_n + \tau_n^i\left((1-\gamma^i_n) \odot X^{\j(\w_n,\alpha^{i,1}_n)}_n  + \gamma^i_n\odot X^{\j(\w_n,\alpha^{i,2}_n)}_n + \sigma \xi_n^i \right),
\end{equation}
for $i \in [N]$.

Next, we construct the nonlinear auxiliary system $(\overline{\X}_n)_{n\geq0}$ such that all the particles $\OX^1_n, \dots, \OX_n^N$ are i.i.d. and $f_n$ distributed, where $f_n$ is given by \eqref{eq:euler}. To do so, we employ an auxiliary function $X^*_n:[0,N) \to \Rd$ which satisfies the following condition:
\[
X^*_n( \alpha) \sim \frac{\F f_n}{\langle \F, f_n \rangle}\,, \qquad \textup{if}\quad \alpha \sim \textup{Unif}[0,N)\,.
\]
A construction of such a random variable is provided in Section \ref{sec:coupling}. Starting from $\overline{\X}_0= \X_0$, the nonlinear system is then iteratively defined for $i \in [N]$ as 
\begin{equation} \label{eq:iid}
\OX_{n+1}^i = (1 - \tau^i_n) \OX^i_n + \tau_n^i\left((1-\gamma^i_n)\odot X^{*}_n(\alpha_n^{i,1})  + \gamma^i_n\odot X^{*}_n(\alpha_n^{i,2}) + \sigma \xi_n^i \right).
\end{equation}

By computing $\mathbb{E}\phi(\OX^i_{n+1})$ one can directly check that the law of $\OX^i_{n+1}$ satisfies the update rule \eqref{eq:euler}, with $Q^+$ defined as in \eqref{eq:gain}. Independence of the nonlinear particles, instead, follows from the independence of the random variables $\gamma^i_n, \xi_n^i, \tau_n^i, \alpha^{i,1}_n, \alpha^{i,2}_n$ used to define the update.

\subsection{Coupling of parents}
\label{sec:coupling}
To construct $X^{*}_n$, we adapt \cite[Lemma 4.1]{borghi2025nanbu} to our settings, where particles are weighted. 
The fundamental idea is that the two pairs of parents $(X^{\j(\alpha_n^{i,1})}_n, X^{\j(\alpha_n^{i,2})}_n)$ and $(X^*_n(\alpha_n^{i,1}),X^*_n(\alpha_n^{i,2}))$ should be optimally coupled to minimize the Wasserstein distance between them. We will construct $X^*_n(\cdot)$ with this additional aim. The settings of the result are left more general, as the Lemma might be of independent interest.

\begin{lemma}\label{l:coupling}
\rev{For $p\geq 1$, let $\W^c_p$ be the Wasserstein distance with \rev{a lower semi-continuous} cost function
$c(x,y)$, and let $f\in\mathcal{P}(\Rd)$ such that $\W_p^c(f, \delta_0)<+\infty$}. Consider
$\x=(x^1,\dots,x^N)\in(\Rd)^N$ and weights $\w=(w^1,\dots,w^N)\in\textup{int}(\Delta_N)$
such that $w^i>0$ and $\sum_{i=1}^N w^i=1$. Define the weighted empirical measure as
$
\mu_{\x,\w}:=\sum_{i=1}^N w^i\,\delta_{x^i},
$
and recall from \eqref{eq:j} the index-map $\j(\w,\cdot)$.

There exists a measurable mapping
\begin{align*}
X_{f}^{*}:(\RR^d)^N\times \textup{int}(\Delta_N) \times[0,N) &\to \RR^d\\
(\x,\w,\alpha) &\mapsto X_f^{*}(\x,\w,\alpha)
\end{align*}
with the following property: if $\alpha\sim\unif[0,N)$, then the pair
$\big(X_f^{*}(\x,\w,\alpha),\,x^{\mathbf{j}(\w, \alpha)}\big)$ is an optimal coupling
between $f$ and $\mu_{\x,\w}$, namely
\rev{\[
\mathbb{E}\left[c\Big(X_f^{*}(\x,\w,\alpha), x^{\mathbf{j}(\w, \alpha)}\Big)^p\right]
=  \left(\W_p^c\big(f,\mu_{\x,\w}\big)\right)^p.
\]}%
\end{lemma}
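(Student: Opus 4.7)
The plan is to reduce the claim to semi-discrete optimal transport and then parametrize an optimal coupling through the single uniform variable $\alpha$.

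First, I would use that $\mu_{\x,\w}$ is purely atomic to disintegrate any $\pi\in\Pi(f,\mu_{\x,\w})$ as
\[
\pi(\d y, \d x) = \sum_{i=1}^N w^i\,\nu_i(\d y)\otimes \delta_{x^i}(\d x),
\]
with $\nu_i\in\mathcal{P}(\Rd)$ satisfying $\sum_i w^i\,\nu_i = f$. Lower semi-continuity of $c$ and tightness of $\Pi(f,\mu_{\x,\w})$ (together with the finiteness hypothesis $\W_p^c(f,\delta_0)<+\infty$, which rules out couplings of infinite cost) yield existence of an optimizer $\pi^{\ast}$ by standard optimal transport theory \cite[Theorem 4.1]{villani2009}. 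Write $\nu_1^{\ast},\dots,\nu_N^{\ast}$ for its conditional measures.

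Second, to realize $\pi^{\ast}$ through $\alpha$, I would partition $[0,N)$ into the half-open intervals
\[
I_i(\w):=\bigl[\,N\textstyle\sum_{\ell<i}w^\ell,\; N\textstyle\sum_{\ell\leq i}w^\ell\,\bigr),
\]
each of length $Nw^i>0$. From the definition \eqref{eq:j} of $\j$, one has $\alpha\in I_i(\w)\iff \j(\w,\alpha)=i$, so $x^{\j(\w,\alpha)}\sim\mu_{\x,\w}$ whenever $\alpha\sim\unif[0,N)$. Since $\Rd$ is a standard Borel space, every Borel probability measure on it is the pushforward of Lebesgue on $[0,1]$ under some Borel map; rescaling $[0,1]$ to $I_i(\w)$, one obtains measurable maps $T_i:I_i(\w)\to\Rd$ with $(T_i)_\#\unif(I_i(\w))=\nu_i^{\ast}$. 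Setting
\[
X_f^{\ast}(\x,\w,\alpha):=T_i(\alpha)\qquad\text{for}\;\;\alpha\in I_i(\w),
\]
a direct check shows that the joint law of $(X_f^{\ast}(\x,\w,\alpha),\,x^{\j(\w,\alpha)})$ under $\alpha\sim\unif[0,N)$ is $\sum_i w^i\,\nu_i^{\ast}\otimes\delta_{x^i}=\pi^{\ast}$, which gives the stated Wasserstein identity.

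The main technical obstacle is ensuring \emph{joint} measurability of $X_f^{\ast}$ in $(\x,\w,\alpha)$. This needs (i) a measurable selection $(\x,\w)\mapsto\pi^{\ast}(\x,\w)$ from the nonempty, compact, convex set of minimizers---provided by Kuratowski--Ryll-Nardzewski applied to the lower semi-continuous cost functional and the weakly continuous coupling correspondence $(\x,\w)\mapsto\Pi(f,\mu_{\x,\w})$---and (ii) a jointly measurable version of the quantile-type construction of the $T_i$, which can be obtained by applying the standard Borel isomorphism in a parameter-uniform way. The argument closely parallels \cite[Lemma 4.1]{borghi2025nanbu}; the only substantive change from the equally weighted case treated there is that the unit-length strips $[i-1,i)$ are replaced by the weighted strips $I_i(\w)$ of length $Nw^i$, which vary continuously with $\w\in\textup{int}(\Delta_N)$.
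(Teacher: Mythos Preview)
Your proposal is correct and is essentially the paper's own argument: measurable selection of an optimal plan $(\x,\w)\mapsto\pi_{\x,\w}$ (the paper cites \cite[Corollary~5.22]{villani2009} rather than Kuratowski--Ryll-Nardzewski directly), disintegration onto the atoms $x^i$, and randomization of each conditional on the weighted strip $I_i(\w)$ via the explicit rescaling $\beta(\w,\alpha)=(\alpha-NS_{\j(\w,\alpha)-1})/(Nw^{\j(\w,\alpha)})$ together with \cite[Lemma~4.22]{kallenberg2021}. The only cosmetic difference is that the paper normalizes each strip to $[0,1)$ before invoking the randomization lemma, which makes the joint $(\x,\w,\alpha)$-measurability more transparent than your ``parameter-uniform Borel isomorphism'' phrasing.
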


\begin{proof} 
Let $\pi_{\x,\w}\in\mathcal{P}(\RR^d\times\RR^d)$ be an optimal transport plan
between $f$ and $\mu_{\x,\w}$ \rev{(assumption $\W_p^c(f, \delta_0)<+\infty$ ensures its existence)}. Thanks to a measurable selection result \rev{for lower semi-continuous costs}, see,
for instance, \cite[Corollary 5.22]{villani2009}, there exists a measurable mapping
\[
(\x,\w)\mapsto \pi_{\x,\w} \quad \textup{s.t.}\quad
\pi_{\x,\w}\in\Gamma_o\big(f,\mu_{\x,\w}\big),
\]
where $\Gamma_o(f,\mu_{\x,\w})$ denotes the set of optimal couplings between
$f$ and $\mu_{\x,\w}$ for the given cost.

Define for any Borel set $A\subseteq\RR^d$
\[
G^i(\x,\w,A):=\frac{\pi_{\x,\w}(A\times\{x^i\})}
{\pi_{\x,\w}(\RR^d\times\{x^i\})}\,.
\]
We note that $G^i$ is a probability kernel from $(\RR^d)^N\times\textup{int}(\Delta_N)$ into $\RR^d$,
thanks to the measurability of $(\x,\w)\mapsto\pi_{\x,\w}$, and so there exists
$g^i=g^i(\x,\w,\beta)$ such that $\law(g^i(\x,\w,\beta))= G^i(\x,\w,\cdot)$
if $\beta\sim\unif[0,1)$, see \cite[Lemma 4.22]{kallenberg2021}. This procedure is
called \textit{randomization} of $G^i(\x,\w,\cdot)$.

Let $S_0:=0$ and $S_i:=\sum_{\ell=1}^i w^\ell$ for $i=1,\dots,N$. \rev{Since weights are strictly positive, define} the rescaled variable
\[
\beta(\w,\alpha):=\frac{\alpha-N S_{\mathbf{j}(\w,\alpha)-1}}{N\,w^{\mathbf{j}(\w,\alpha)}}\in[0,1).
\]
Observe that, if $\alpha\sim\unif[0,N)$, then $\mathbb{P}(\mathbf{j}(\w,\alpha)=i)=w^i$ and,
conditionally on $\{\mathbf{j}(\w,\alpha)=i\}$, the random variable $\beta(\w,\alpha)$ is
uniform on $[0,1)$.

Let us define the mapping $X_{f}^{*}$ as
\[
X_f^{*}(\x,\w,\alpha):=
\sum_{i=1}^N \bm{1}_{\{\mathbf{j}(\w,\alpha)=i\}}\,
g^i\!\left(\x,\w,\beta(\w,\alpha)\right).
\]
To conclude, we need to show that
$\big(X_f^{*}(\x,\w,\cdot),\,x^{\j(\w,\cdot)}\big)$ has joint distribution
$\pi_{\x,\w}$ for $\alpha\sim\unif[0,N)$. Take a Borel set $A\subseteq\RR^d$
and $j\in\{1,\dots,N\}$, we have
\begin{align*}
\mathbb{P}\Big(X_f^{*}(\x,\w,\alpha)\in A, &\,x^{\j(\w,\alpha)}=x^j\Big)
\\
&=
\mathbb{P}\Big(X_f^{*}(\x,\w,\alpha)\in A \,\Big|\, \j(\w,\alpha)=j\Big)
\,\mathbb{P}\big(\j(\w,\alpha)=j\big)\\
&=\mathbb{P}\Big(g^j\big(\x,\w,\beta(\w,\alpha)\big)\in A \,\Big|\, \j(\w,\alpha)=j\Big)\,w^j\\
&= G^j(\x,\w,A)\,w^j =\frac{\pi_{\x,\w}(A\times\{x^j\})}{\pi_{\x,\w}(\RR^d\times\{x^j\})}\,w^j.
\end{align*}
Finally, since the second marginal of $\pi_{\x,\w}$ is $\mu_{\x,\w}$, we have
$\pi_{\x,\w}(\RR^d\times\{x^j\})=w^j$, with $w^j>0$, hence
\[
\mathbb{P}\Big(X_f^{*}(\x,\w,\alpha)\in A,\,x^{\j(\w,\alpha)}=x^j\Big)
= \pi_{\x,\w}(A\times\{x^j\}),
\]
which shows that the joint law of
$\big(X_f^{*}(\x,\w,\alpha),\,x^{\j(\w,\alpha)}\big)$ is $\pi_{\x,\w}$.
In particular,
\rev{\[
\mathbb{E}\left[ c \Big(X_f^{*}(\x,\w,\alpha), x^{\j(\w,\alpha)}\Big)^p\right]
=\int c(x,y)^p\,\pi_{\x,\w}(\d x,\d y)
=\left(\W_p^c\big(f,\mu_{\x,\w}\big)\right)^p.
\]}
\end{proof}

\subsection{Limit as $N \to \infty$} \label{sec:limit:N}

First, we collect an estimate on the moments evolution of the discretized PDE \eqref{eq:euler}. Then, we will show that $f^N_n$ converges to $f_n$ using a Gr\"onwall-type argument.

\begin{lemma}[Moments evolution] \label{l:moments}
\rev{Let Assumption \ref{asm:F} hold for $\F$, and $(f_n)_{N\in \mathbb{N}}$ be defined by \eqref{eq:euler} with $f_0 \in \mathcal{P}_q(\Rd)$}.  There exists a constant $\tilde C = \tilde C(q)>0$ such that for any $n\geq 0$ it holds $M^{1/q}_q(f_n) \leq   \tilde{C} e^{ \tilde{C} \kappa_\F n\tau} ( M^{1/q}_q(f_0) + \sigma)$, \rev{with $\kappa_\F = \overline{\F}/\underline{\F}$}.
\end{lemma}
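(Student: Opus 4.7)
The plan is to test the recursion \eqref{eq:euler} against the weight $\phi(x)=|x|^q$ to obtain a one-step bound on $M_q(f_{n+1})$ in terms of $M_q(f_n)$, and then iterate. Writing $m_n^q := M_q(f_n)$, the linearity in \eqref{eq:euler} immediately gives
\[
m_{n+1}^q = (1-\tau)\,m_n^q + \tau \,\langle |x|^q,\, Q^+(f_n,f_n)\rangle,
\]
so everything reduces to estimating $\langle |x|^q, Q^+(f_n,f_n)\rangle$ from above with the right constants. Using the definition \eqref{eq:gain} of $Q^+$, this quantity is the expectation of $|x'|^q$ under the product measure of $\F f_n/\langle \F,f_n\rangle$ (twice) with $\mu^\gamma$ and $\mu^\xi$.

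The first key step is the pointwise inequality
\[
|x'| \;\leq\; |(1-\gamma)\odot x| + |\gamma\odot x_*| + \sigma|\xi| \;\leq\; |x| + |x_*| + \sigma|\xi|,
\]
which follows because $\gamma\in[0,1]^d$ componentwise; this is exactly the sort of contraction used in Lemma \ref{l:lipschitz}. The second and crucial step, to avoid a quadratic dependence $\kappa_\F^2$ in the exponent, is to treat $\F(x)\F(x_*)/\langle \F, f_n\rangle^2\,f_n(\d x) f_n(\d x_*)$ as a genuine probability measure on $\Rd\times\Rd$ rather than bounding its density in sup-norm. Then one observes that its marginal $\F f_n/\langle \F, f_n\rangle$ satisfies
\[
\int |x|^q \,\frac{\F(x)\,f_n(\d x)}{\langle \F, f_n\rangle} \;\leq\; \frac{\overline{\F}}{\underline{\F}}\,M_q(f_n) \;=\; \kappa_\F\, m_n^q.
\]
Combining this with either Minkowski in $L^q$ or the elementary bound $(a+b+c)^q \leq 3^{q-1}(a^q+b^q+c^q)$, and using $M_q(\mu^\xi)<\infty$ for the Gaussian, we arrive at a recursion of the form
\[
m_{n+1}^q \;\leq\; \bigl(1+c_q\kappa_\F\tau\bigr)\,m_n^q \;+\; c_q\,\tau\,\sigma^q,
\]
where $c_q$ depends on $q$ (and implicitly on $M_q(\mu^\xi)$).

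From here the argument is standard: iterating and using $1+a\leq e^a$ gives
\[
m_n^q \;\leq\; e^{c_q \kappa_\F n\tau}\,m_0^q + \sigma^q\,\frac{e^{c_q\kappa_\F n\tau}-1}{\kappa_\F}
\;\leq\; e^{c_q\kappa_\F n\tau}\bigl(m_0^q + \sigma^q\bigr),
\]
where in the last step I used $\kappa_\F \geq 1$. Taking $q$-th roots and using concavity of $t\mapsto t^{1/q}$ (so that $(a+b)^{1/q}\leq a^{1/q}+b^{1/q}$ for $q\geq 1$) yields the stated bound with $\tilde C = \tilde C(q)$. The only subtle point, and the step most likely to be botched, is the improvement from $\kappa_\F^2$ to $\kappa_\F$ in the recursion constant: a lazy supremum bound on the selection density would give an exponent $e^{C\kappa_\F^2 T}$ that is sharper than stated only when the probabilistic interpretation of the gain operator is exploited. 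Everything else is a routine Grönwall-type iteration.
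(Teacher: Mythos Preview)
Your proposal is correct and follows essentially the same approach as the paper: test \eqref{eq:euler} against $|x|^q$, use $|x'|^q\le C(|x|^q+|x_*|^q+\sigma^q)$, exploit that $\F f_n/\langle\F,f_n\rangle$ is a probability measure (so only one factor $\kappa_\F$ survives), and iterate the resulting affine recursion before taking the $q$-th root. Your explicit remark about avoiding $\kappa_\F^2$ via the probabilistic interpretation of the selection kernel is exactly the point the paper uses implicitly when it reduces the double integral in $Q^+$ to a single integral against the marginal.
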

\begin{proof}
From \eqref{eq:euler} we obtain
\[
M_q(f_{n+1})=(1-\tau)M_q(f_n)+\tau\langle |\,\cdot\,|^q,Q^+(f_n,f_n)\rangle.
\]
Using the jump definition \eqref{eq:coll} and the inequality 
$
|x'|^q\le  C (\sigma^q + |x|^q+|x_*|^q)
$
(valid for $\gamma\in[0,1]^d$ and any additive noise with finite $q$-moment), we have
\[
\langle |\,\cdot\,|^q,Q^+(f_n,f_n)\rangle
\le  C \int_{\Rd}(\sigma^q + |x|^q)\,\frac{\F(x)}{\langle \F,f_n\rangle}\,f_n(\d x) \leq  C \kappa_{\F}(\sigma^q + M_q(f_n))\,.
\]
thanks to \rev{Assumption \ref{asm:F} on $\F$}. Hence
$
M_q(f_{n+1}) \leq \left (1+ \tau C\kappa_{\F} \right ) M_q(f_n) + \tau C\kappa_{\F}\sigma^q .
$
Iterating the inequality
yields
\[
M_q(f_n)
\le (1+C\kappa_\F\tau)^n M_q(f_0)
+ C\kappa_\F \sigma^q \tau \sum_{k=0}^{n-1}(1+C\kappa_\F\tau)^k.
\]
Using the identity for the geometric sum $\sum_{k=0}^{n-1}(1 + \tau a)^k  = ((1 + \tau a)^n - 1)/(a\tau)$ and $(1+a\tau)^n\le e^{a n\tau}$, we obtain
\[
M_q(f_n)\le e^{C\kappa_\F n\tau}\bigl(M_q(f_0)+\sigma^q\bigr),
\]
and therefore
$
M_q(f_n)^{1/q}\le \tilde{C}e^{\tilde{C}\kappa_\F n\tau}\left(M_q(f_0)^{1/q}+\sigma\right)
$ for some $\tilde C = \tilde C(q)$.
\end{proof}

Next, we apply Lemma \ref{l:coupling} in the construction of the nonlinear system \eqref{eq:iid}, by setting
\[ X^*_n(\cdot) := X^*_{\tilde{f}_n}(\X_n, \w_n,\cdot) \qquad \textup{with} \quad \tilde{f}_n  := \frac{\F f_n}{\langle \F, f_n\rangle}\,. \]
Through the random variables $(\gamma^i_n, \xi_n^i, \tau_n^i, \alpha^{i,1}_n, \alpha^{i,2}_n)$, we have coupled particle $i$ of the algorithm \eqref{eq:nanbu} with particle $i$ of the nonlinear system \eqref{eq:iid}. 
\rev{Recall the crossover-mutation update \eqref{eq:coll} is Lipschitz by with respect to $\D$ by Lemma \ref{l:lipschitz}.
By coupling optimality of $X^*_n$, we can track their distance over time
as
\begin{align}
\mathbb{E}\D(X^i_{n+1} ,\OX_{n+1}^i)  
 & \leq ( 1- \tau) \mathbb{E}\D(X^i_n ,\OX_n^i)  \notag\\
 & \qquad +  \tau\left( \mathbb{E}\D\left(X^{\j(\w_n,\alpha^{i,1}_n)}_n , X^*_{n}(\alpha^{i,1}_n) \right) + \mathbb{E}\D \left(X^{\j(\w_n,\alpha^{i,2}_n)}_n, X^*_{n}(\alpha^{i,2}_n) \right) \right)\notag \\
 & \leq (1 - \tau)  \mathbb{E}\D (X^i_n, \OX_n^i) + 2\tau \mathbb{E} \WW \Big(\tilde{f}_n, \sum_{i=1}^N w_n^i \delta_{X^i_n} \Big) \label{eq:est1}
\end{align}
Recall the weights are such that $w^i_n \propto \F(X^i_n)$, and so $\tilde{f}_n^N:= \sum_i w^i_n \delta_{X^i_n} = \F f^N_n/\langle \F, f^N_n\rangle$ is also a re-weighted probability measure like $\tilde f$. Thanks to Lemma \ref{l:stability} we then have 
 $
 \W^{|\cdot|}_1(\tilde{f}_n, \tilde{f}_n^N) \leq C_\F  \W^{|\cdot|}_1(f_n, f_n^N) \,.
 $
By continuing the estimate \eqref{eq:est1}, we  obtain 
\begin{align*}
\mathbb{E}\D(X^i_{n+1} , \OX_{n+1}^i) &\leq ( 1- \tau) \mathbb{E}\D(X^i_{n}, \OX_{n}^i) + 2\tau C_\F \mathbb{E}\WW(f_n, f^N_n)\\
&\leq ( 1- \tau) \mathbb{E}\D(X^i_{n} ,\OX_{n}^i) + 2\tau C_\F\mathbb{E} \WW(f_n, \overline{f}^N_n) + 2\tau C_\F \mathbb{E}\WW(\overline{f}^N_n, f^N_n)\,.
\end{align*}}%
where we also applied the triangle inequality with the empirical measure $\overline{f}^N_n$ associated with the nonlinear system $\mathbf{\OX}_n$.

Since \rev{$ \WW(f^N_n, \overline{f}_n^N) \leq (1/N)\sum_i \D(X^i_{n+1} ,\OX_{n+1}^i)$}, when we sum the estimate among all $i = 1,\dots,N$, we obtain
\rev{\[
\frac1N\sum_{i=1}^N \mathbb{E}\D(X^i_{n+1}, \OX_{n+1}^i)  \leq (1 + \tau(2C_\F - 1))\frac1N\sum_{i=1}^N \mathbb{E}\D(X^i_{n} , \OX_{n}^i) + 2\tau C_\F\mathbb{E}  \WW(\overline{f}^N_n,f_n)\,.
\]}%
It is crucial now to note that the term \rev{$\mathbb{E} \WW( \overline{f}^N_n,f_n)$ is something that we can control thanks to  $\WW\leq \W^{|\cdot|}_1$, and for $\W^{|\cdot|}_1$ we can use} the concentration inequality provided by Lemma \ref{l:rate} since $\overline{f}_n^N$ is made of i.i.d $f_n$-distributed particles. In particular, in view of the moments estimate (Lemma \ref{l:moments}), we have for constants $C = C(q,d, M_q(f_0)^{1/q},\sigma)>0$, $C' = C'(q)>0$
\[
\mathbb{E}\rev{ \W^{|\cdot|}_1}( \overline{f}^N_n,f_n) \leq C  e^{ C' \kappa_\F n \tau } \ve_1(N)\,.
\]

We iterate now the estimates for $n\geq 0$. To simplify the notation, let 
\rev{$
E_n := \frac1N\sum_{i=1}^N \mathbb{E}\D(X^i_{n}, \OX_{n}^i)\, .
$}
Then the previous inequality implies
\begin{align*}
E_{n+1}& \leq (1 + 2\tau \rev{C}_\F) \,E_n + 2\tau\rev{C_\F\,\mathbb{E}\WW(f_n,\overline f_n^N)} \\
& \leq (1 + 2\tau \rev{C}_\F)\,E_n + C_0\,\tau\,\rev{C_\F}\, e^{C' C_\F n\tau}\ve_1(N),
\end{align*}
for again, some constant \rev{$C_0=C_0(q,d, M_q(f_0)^{1/q},\sigma)>0$}. Iterating for $n \tau \leq T$, and using that $E_0 = 0$ yields the Gr\"onwall-type inequality
\rev{\begin{align*}
E_n 
& \leq (1 + 2\tau C_\F)^n E_0 
+ C_0\,\tau\, C_\F\, \ve_1(N)\sum_{k=0}^{n-1} (1 + 2\tau C_\F)^{\,n-1-k} e^{C' C_\F k\tau} \\
& \leq C_0\,\tau\,C_\F\, \ve_1(N)\sum_{k=0}^{n-1} e^{2C_\F(n-1-k)\tau}\, e^{C' C_\F k\tau} \\
& \leq C_0\,\tau\,C_\F\, \ve_1(N)\sum_{k=0}^{n-1} e^{C_\F\max(2, C')(n-1)\tau}   \leq  C_0 C_\F \, T\, e^{C_\F\max(2, C') T}\, \ve_1(N).
\end{align*}}%

Finally, since \rev{$\mathbb{E}\WW(f^N_n, \overline{f}^N_n) = \mathbb{E}\| f^N_n - \overline{f}_n^N\|_\BL \leq E_n$} by definition of the Wasserstein distance, we obtain the estimate
\rev{\begin{equation} \label{eq:est2}
\begin{split}
\mathbb{E}\|f^N_n - f_n \|_\BL &\leq \mathbb{E}\|f^N_n - \overline{f}^N_n \|_\BL + \mathbb{E}\|\overline{f}^N_n - f_n \|_\BL  \\
& \leq C T e^{C' T} \ve_1(N) 
\end{split}
\end{equation}}
for some  $C = C(\F, q,d, M_q(f_0)^{1/q},\sigma)>0$, $C' = C'(\F,q)>0$.

\subsection{Limit as $\tau \to 0$  and final estimate} Now, we show  that the explicit time discretization $f_n$ \eqref{eq:euler} converges to the solution of the PDE $f(t_n)$ \eqref{eq:boltzmann} as $\tau \to 0$ in the \rev{BL norm $\| f_n - f(t_n)\|_{\BL}$}.  We expect an explicit Euler discretization to return an error of order $\mathcal{O}(\tau)$. The proof is standard, \rev{see for instance \cite{piccoli2014generalized}}, but we give a brief sketch for completeness.

We note that the RHS of the Boltzmann-like equation is Lipschitz with respect to \rev{$\|\cdot\|_\BL$ since, thanks to crossover-mutation stability (Lemma \ref{l:lipschitz}) and selection stability (Lemma \ref{l:stability}), we have for any $f, g \in \mathcal{P}(\Rd)$
\[
\|  Q^+(f, f) -  Q^+(g,g)\|_\BL \leq 2 \left \| \frac{\F f}{\langle \F, f\rangle } - \frac{\F g}{\langle \F, g\rangle }   \right\|_\BL \leq 2 C_\F \|f - g \|_\BL\, .
\]
}%
\rev{Using moment bounds for $f$ (similarly to the time-discrete result Lemma \ref{l:moments}), it is possible to obtain a uniform estimate for the solution $f$ of the type
\[
\|f(t)-f(s)\|_{\BL}
\le C\,(M_1(f_0)+ \sigma)\,e^{\tilde C\,\kappa_\F T}\,|t-s|.
\]}%
Let $\rev{e_n := \|f_n - f(t_n)\|_{\BL}}$. By the weak formulation of \eqref{eq:boltzmann}, we have \rev{for $\phi$, $\textup{Lip}(\phi)\leq 1$, $\|\phi\|_\infty\leq 1/2$}
\begin{multline*}
\langle \phi, f_{n+1}-f(t_{n+1})\rangle
= (1-\tau)\langle \phi, f_n-f(t_n)\rangle
+ \tau \langle \phi, Q^+(f_n,f_n)-Q^+(f(t_n),f(t_n))\rangle \\
\quad + \int_{t_n}^{t_{n+1}}\Big(\langle \phi, Q^+(f(t_n),f(t_n)) - Q^+(f(s),f(s))\rangle
+ \langle \phi, f(s)-f(t_n)\rangle\Big)\,\d s.
\end{multline*}
Taking the supremum over $\phi$, and using the Lipschitz stability of $Q^+$ in \rev{$\|\cdot\|_{\BL}$}, we deduce
\rev{\begin{align*}
e_{n+1}
&\le (1-\tau)e_n + \tau 2C_\F e_n
+  \int_{t_n}^{t_{n+1}}\Big(2C_\F \|f(t_n)-f(s)\|_{\BL} + \|f(s)-f(t_n)\|_{\BL}\Big)\,\d s \\
&\le \big(1+\tau(2C_\F-1)\big)e_n
+ (1+2C_\F)\int_{t_n}^{t_{n+1}}\|f(s)-f(t_n)\|_{\BL}\,\d s \\
&\le \big(1+\tau(2C_\F-1)\big)e_n
+ (1+2C_\F)\int_{t_n}^{t_{n+1}} \tilde{C}\,(1+2C_\F)\,e^{\tilde C\,2C_\F T} (s-t_n)\,\d s \\
&\le \big(1+\tau(2C_\F-1)\big)e_n + \tilde{C}\,(1+2C_\F)^2\,e^{\tilde C'\,2C_\F T}\tau^2\,,
\end{align*}}
for some $\tilde{C} = \tilde{C}(q,M_q(f_0)^{1/q},\sigma)>0$, $\tilde{C}' = \tilde{C}'(q)$.

Iterating the above inequality for $t_n=n\tau\le T$ and using $e_0=0$ gives for some other constants \rev{$C = (\F, q,M_q(f_0)^{1/q},\sigma)$, $C' = C'(\F, q)>0$ }
\begin{equation} \label{eq:est3}
\sup_{t_n\le T} \|f_n-f(t_n)\|_{\rev{\BL}} \le C  \tau e^{C' \kappa_\F T},
\end{equation}
which shows that $\|f_n-f(t_n)\|_{\rev{\BL}}=\mathcal{O}(\tau)$ uniformly on $[0,T]$.

\begin{proof}[Proof of Theorem \ref{t:main}]
To recap, we have compared in Section \ref{sec:limit:N} the empirical measure $f^N_n$ generated by Algorithm \ref{alg:ga} with the time-discretized PDE \eqref{eq:euler}. In this section, instead, we have compared $f_n$ with a measure solution to the PDE \eqref{eq:boltzmann}. By collecting the two  estimates, \eqref{eq:est2} and \eqref{eq:est3}, we obtain the desired error decomposition
\rev{\begin{align*}
\sup_{t_n = n\tau \le T} \|f^N_n-f(t_n)\|_{\BL} &  \leq \sup_{t_n = n\tau \le T} \|f^N_n-f_n \|_{\BL}  + \sup_{t_n = n\tau \le T} \|f_n-f(t_n)\|_{\BL} \\
& \leq C T e^{C' T} \left( \,\ve_1(N)\, + \,\tau\,\right)
\end{align*}
where $C = C(\F, d,q,M_q(f_0)^{1/q},\sigma)>0$,  $C' =  C(\F,q)$ some positive constants.}
\end{proof}

\section{Outlook}

In this paper, we restricted the attention to fitness-based selection. Another important strategy in genetic algorithms is rank-based selection, in which only the top-performing solutions are chosen for reproduction \cite{khalid2013selection}. Rank-based particle interactions have been studied, for instance, in \cite{jourdain2013chaos, bencheikh2022weak} in connection with mean-field-type PDEs. The analysis of rank-based selection within generalized Boltzmann models such as \eqref{eq:boltzmann} remains, to our knowledge, an open problem.

\section*{Acknowledgment}
The author was supported by the Wolfson Fellowship of the Royal Society “Uncertainty quantification, data-driven simulations and learning of multiscale complex systems governed by PDEs” of Prof. L. Pareschi at Heriot-Watt University.

\bibliographystyle{abbrv}
\bibliography{bibfile}
\end{document}